\definecolor{LinkColour}{rgb}{0,0,0.3} 
\newtheorem{theorem}{Theorem}[section]
\newtheorem{corollary}[theorem]{Corollary}
\newtheorem{lemma}[theorem]{Lemma}
\newtheorem{identity}[theorem]{Identity}
\newcommand{\per}{\operatorname{per}} 
\newcommand{\abs}[1]{\left\lvert#1\right\rvert}
\providecommand{\Nset}{\mathbb{N}}
\newenvironment{proof}[1][]%
{\noindent\textit{\ifthenelse{\equal{#1}{}}{Proof}{Proof #1}}: }%
{\leavevmode\unskip\nobreak\quad\hspace*{\fill}$\Box$\par}
\begin{document}
\title{Connections between two classes of generalized
 Fibonacci numbers squared and permanents of (0,1) Toeplitz matrices
} \author{Michael A. Allen\corref{cor1}}
\ead{maa5652@gmail.com} \cortext[cor1]{Corresponding author}
\author{Kenneth Edwards} \ead{kenneth.edw@mahidol.ac.th}

\address{Physics Department, Faculty of Science,
Mahidol University, Rama 6 Road, Bangkok
10400, Thailand}

\begin{abstract}
By considering the tiling of an $N$-board (a linear array of $N$
square cells of unit width) with new types of tile that we refer to as
combs, we give a combinatorial interpretation of the product of two
consecutive generalized Fibonacci numbers $s_n$ (where
$s_{n}=\sum_{i=1}^q v_i s_{n-m_i}$, $s_0=1$, $s_{n<0}=0$, where $v_i$
and $m_i$ are positive integers and $m_1<\cdots<m_q$) each raised to
an arbitrary non-negative integer power. A $(w,g;m)$-comb is a tile
composed of $m$ rectangular sub-tiles of dimensions $w\times1$
separated by gaps of width $g$.  The interpretation is used to give
combinatorial proof of new convolution-type identities relating
$s_n^2$ for the cases $q=2$, $v_i=1$, $m_1=M$, $m_2=m+1$ for $M=0,m$
to the permanent of a (0,1) Toeplitz matrix with 3 nonzero diagonals
which are $-2$, $M-1$, and $m$ above the leading diagonal.  When $m=1$
these identities reduce to ones connecting the Padovan and Narayana's
cows numbers.
\end{abstract}

\begin{keyword}
tiling \sep
combinatorial identity \sep
permanent of (0,1) Toeplitz matrix \sep
strongly restricted permutation \sep
linear recurrence relation \sep
directed pseudograph
\MSC[2010]{Primary  05A19; Secondary 05A05, 11B37, 11B39}
\end{keyword}

\maketitle

\section{Introduction}
The tiling of an $n$-board (an $n\times1$ board divided into $n$
square cells) using fence tiles has recently been used to obtain quick
combinatorial proofs of various identities, some of which were new,
concerning powers of the Fibonacci numbers \cite{EA19,EA20,EA20a}. A
$(w,g)$-fence is a tile composed of two $w\times1$ sub-tiles separated
by a $g\times1$ gap. The tiling of an $n$-board using tiles with gaps
or tiles of non-integer length can always be expressed in terms of a
tiling with metatiles. A \textit{metatile} is a grouping of tiles that
exactly covers an integer number of cells and cannot be split into
smaller metatiles~\cite{Edw08}. Obtaining an expression for the number
of metatiles of a given length is the key to obtaining some of the
identities which are of a convolution type when the number of possible
metatiles is infinite.

In \cite{EA15} we showed that there is a bijection between strongly
restricted permutations $\pi$ of the set
$\Nset_n=\{1,2,\ldots,n\}$ for which the permissible values of
$\pi(i)-i$ for each $i\in\Nset_n$ are the elements of a finite set $W$
which is independent of $n$~\cite{Leh70} and the tilings of an
$n$-board with a finite number of types of $(\frac12,g)$-fences where
$g\in\{0,1,\ldots\}$. The types of fence and the rules for where they
can be placed are as follows. Each negative element $-g$ of $W$
corresponds to a $(\frac12,g-1)$-fence (denoted by $\bar{F}_{g-1}$)
that must be placed so that the right side of the left sub-tile is
aligned with a cell boundary of the $n$-board. Each non-negative
element $g$ of $W$ corresponds to a $(\frac12,g)$-fence (denoted by
$F_{g}$) that must be placed so that the left side of the left
sub-tile is aligned with a cell boundary.  We denote the number of
strongly restricted permutations of $\Nset_n$ such that $\pi(i)-i\in
W$ by $P_n^W$. 

The permanent of an $n\times n$ matrix $A$ is given by 
\[
\per A =\sum_{i_1,i_2,\ldots,i_n}
\abs{\varepsilon_{i_1i_2\ldots i_n}}A_{1i_1}A_{2i_2}\ldots A_{ni_n}
\]
where $\varepsilon_{i_1i_2\ldots i_n}$ is the permutation symbol. The
permanent is thus like the determinant but with all plus signs in the
sum.  An equivalent definition of $P_n^W$ is the permanent of an
$n\times n$ matrix whose $(i,j)$th entry is 1 if $j-i\in W$ and 0
otherwise \cite{Leh70}. The matrix is a (0,1) Toeplitz matrix such that the
diagonal $w$ above the leading diagonal is nonzero iff $w\in W$. There
is an explicit formula for $P^{\{-k,0,ki\}}$ for $k,i\in1,2,3,\ldots$
\cite{CCR97}. For the general case, there is a five-step procedure for
obtaining the generating function for $P_n^W$ \cite{Bal10}. When $-1$
is the only negative element of $W$, an expression for the recursion
relation for $P_n^W$ can be obtained directly in terms of the elements
of $W$; for convenience we state the following theorem which is a
re-expression of part of Theorem~4.1 in \cite{EA15}.
\begin{theorem}\label{T:-1}
If  $W=\{-1,d_1,\ldots,d_r\}$ where $0\le d_1<d_2<\cdots<d_r$ and
$d_r>0$ then for all $n\ge0$,
\[
P_n^W=P^W_{n-d_1-1}+\cdots+P^W_{n-d_r-1}+\delta_{n,0},
\]
where we take $P^W_{n<0}=0$ and $\delta_{i,j}$ is 1 if $i=j$ and 0 otherwise.
\end{theorem}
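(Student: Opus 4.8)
The plan is to work entirely on the tiling side of the bijection established in \cite{EA15}, since there the metatile structure makes the recurrence transparent. For $W=\{-1,d_1,\ldots,d_r\}$ the associated fences are a single $\bar F_0$ (the $(\tfrac12,0)$-fence coming from the element $-1$) together with the fences $F_{d_1},\ldots,F_{d_r}$ coming from the non-negative elements. I would prove the recurrence by decomposing an arbitrary tiling of the $n$-board according to its leftmost metatile, so the first task is to enumerate the possible leftmost metatiles. Throughout I place cell boundaries at the integer positions $0,1,\ldots,n$ and track which half-cells each legally placed sub-tile occupies.

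The key step is to show that there are exactly $r$ leftmost metatiles, one of length $d_j+1$ for each $j$. First I would observe that the left half of cell~1, the interval $[0,\tfrac12]$, can only be filled by the left sub-tile of some $F_{d_j}$ placed at the left edge: right sub-tiles of the $F$-fences always cover right half-cells, and both sub-tiles of any legally placed $\bar F_0$ lie at position $\ge\tfrac12$, so none of these reaches $[0,\tfrac12]$ without protruding past the board. Having placed $F_{d_j}$, its right sub-tile occupies the right half of cell $d_j+1$, and I would then show the intervening half-cells are forced: by the same case analysis the right half of cell~1 can only be covered by the left sub-tile of a $\bar F_0$, whose right sub-tile then covers the left half of cell~2, and iterating this argument forces a chain of exactly $d_j$ copies of $\bar F_0$ that closes precisely against the right sub-tile of $F_{d_j}$ at cell $d_j+1$. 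Since each internal cell boundary is straddled by one of these $\bar F_0$'s, this grouping covers cells $1,\ldots,d_j+1$, cannot be split, and is the unique metatile beginning with $F_{d_j}$; hence the leftmost metatiles have lengths $d_1+1,\ldots,d_r+1$.

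With the leftmost metatiles in hand the recurrence follows from the standard decomposition: for $n\ge1$ every tiling consists of a leftmost metatile of some length $d_j+1$ followed by an independent tiling of the remaining $n-d_j-1$ cells, giving $P_n^W=\sum_{j=1}^r P^W_{n-d_j-1}$ with the convention $P^W_{m}=0$ for $m<0$. For $n=0$ the board is empty and admits only the empty tiling, so $P_0^W=1$; since the sum vanishes at $n=0$, the term $\delta_{n,0}$ supplies exactly this value while contributing nothing for $n\ge1$, yielding the stated formula for all $n\ge0$.

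The hard part will be the forcing argument of the second paragraph: carefully verifying which half-cell each legally placed sub-tile occupies and confirming that the $\bar F_0$-chain is simultaneously forced and indecomposable. Once the leftmost metatiles are pinned down, the remaining passage to the recurrence is routine book-keeping.
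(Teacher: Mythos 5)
Your argument is correct, but note that the paper does not actually prove this theorem: it is stated as a re-expression of part of Theorem~4.1 of the cited reference [EA15], so there is no in-paper proof to compare against. What you have written is a valid self-contained proof in exactly the style the paper uses elsewhere (conditioning on an extremal metatile, as in the proofs of Identities~\ref{I:gen1}--\ref{I:s2-sI}). Your forcing analysis is sound: the left half of cell~1 can only be reached by the left sub-tile of some $F_{d_j}$ (right sub-tiles of $F$-fences and left sub-tiles of $\bar F_0$ occupy right half-cells, and the right sub-tile of a $\bar F_0$ reaching $[0,\tfrac12]$ would force its left sub-tile off the board), and the subsequent chain of $d_j$ squares $\bar F_0$ straddling the boundaries $1,\ldots,d_j$ is forced because a right half-cell in the interior can only be filled by the left sub-tile of a $\bar F_0$ once the adjacent left half-cells are occupied. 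Since every internal boundary of the resulting block is straddled, it is indeed a single metatile of length $d_j+1$, the lengths are distinct, and the recurrence with the $\delta_{n,0}$ term follows from the standard first-metatile decomposition together with $P_0^W=1$. One small point worth making explicit: the case $d_1=0$ (permitted by the hypothesis) degenerates to a chain of zero $\bar F_0$'s, i.e.\ the metatile is the single square $F_0$; your argument covers this but it deserves a sentence.
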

We will also use the result 
that $P_n^{W^{(-1)}}=P_n^W$ where $W^{(-1)}=\{x\mid -x\in W\}$.

In Section~\ref{s:C} we generalize the notion of a fence to a tile,
that we refer to as a comb, composed of an arbitrary number of
sub-tiles. We show that tiling an $n$-board with certain types of
combs leads to a combinatorial interpretation of products of two
consecutive generalized Fibonacci numbers each raised to a
non-negative integer power. As with fences, this result can be used to
obtain identities in a quick and intuitive manner once we have found
an expression for the number of metatiles when tiling with the
specified types of comb. As our first example of this, in Sections \ref{s:I}
and \ref{s:II} we obtain simple relations between the numbers of
metatiles of a given length when tiling with two types of combs (with
$M$ or $m+1$ gaps and all sub-tiles and gaps of width $\frac12$) and
$P_n^{\{-2,M-1,m\}}$ for $M=0,m$. This enables us to obtain various
general identities relating the squares of two one-parameter families
of generalized Fibonacci numbers to permanents.  The results for
the $m=1$ cases are shown to reduce to identities connecting the
Padovan and Narayana's cows sequences in Section~\ref{s:cows}.

\section{Combs}\label{s:C}

We define a $(w,g;m)$-\textit{comb} as a linear array of $m$ sub-tiles (which
we refer to as \textit{teeth}) of dimensions $w\times1$ separated by
$m-1$ gaps of width $g$. Evidently, a $(w,g;2)$-comb is a
$(w,g)$-fence, a $(w,g;1)$-comb is a $w\times1$ tile, and a
$(w,0;m)$-comb is a $mw\times1$ tile. The following theorem is a
generalization of Theorem~5 in \cite{EA21}.

\begin{theorem}\label{T:1combs}
If $A_n$ is the number of ways to tile an $n$-board using $v_i$
colours of $(1,p-1;m_i)$-combs for $i=1,\ldots,q$, where $p$ and the
$m_i$ are positive integers and $m_1<\cdots<m_q$, then for $n\ge0$,
\[
A_{pn+r}=s_n^{p-r}s_{n+1}^r, \quad r=0,\ldots,p-1,
\]
where $s_n=v_1s_{n-m_1}+\cdots+v_qs_{n-m_q}+\delta_{n,0}$,
$s_{n<0}=0$.
\end{theorem}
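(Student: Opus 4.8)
The plan is to exploit the rigid spacing of a comb's teeth to decouple the tiling of the $N$-board (with $N=pn+r$) into $p$ independent one-dimensional tiling problems. In a $(1,p-1;m_i)$-comb the teeth have width $1$ and the gaps width $p-1$, so the distance between the left edges of two consecutive teeth is $1+(p-1)=p$. Hence if the leftmost tooth of such a comb covers cell $c$, its teeth cover exactly the cells $c,c+p,c+2p,\ldots,c+(m_i-1)p$, all of which share the same residue modulo $p$. I would therefore partition the cells $1,2,\ldots,N$ into $p$ \emph{tracks} according to residue class modulo $p$ and observe that every comb lies entirely within a single track, its gaps falling on cells of other tracks which impose no constraint because a gap covers no cell.

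The key step is to show that a tiling of the $N$-board is the same as an independent choice of tiling on each track. Since every cell must be covered by exactly one tooth and the teeth of any one comb all lie in a single residue class, the combs placed in a given track must between them cover every cell of that track, with no interaction across tracks. Compressing a track of $L$ cells to an $L$-board, a $(1,p-1;m_i)$-comb occupying $m_i$ consecutive track-cells becomes an ordinary $m_i\times1$ tile, so tiling the track is in bijection with tiling an $L$-board using $v_i$ colours of $m_i\times1$ tiles. Conditioning on the leftmost tile shows that the number of such tilings satisfies the same recurrence and initial conditions as $s_L$, so each track of length $L$ contributes a factor $s_L$.

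Finally I would count the track lengths. For $N=pn+r$ with $0\le r\le p-1$, the residue class $j\in\{1,\ldots,p\}$ contains $\lfloor(N-j)/p\rfloor+1$ cells, which is $n+1$ for the $r$ classes with $j\le r$ and $n$ for the remaining $p-r$ classes. Multiplying the per-track factors then yields $A_{pn+r}=s_{n+1}^{\,r}s_n^{\,p-r}$, as required. I expect the main obstacle to be not any calculation but making the decoupling bijection watertight: one must check that the transparency of the gaps genuinely renders the tracks independent and that, since a comb has no gaps beyond its outermost teeth, no comb can protrude past the board in a way that corrupts the per-track cell counts, so that the elementary residue count in the last step is exactly what fixes the exponents $r$ and $p-r$.
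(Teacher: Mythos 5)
Your proposal is correct and takes essentially the same approach as the paper: the paper's proof is precisely your track decomposition, phrased as a bijection sending the cells in each residue class modulo $p$ to one board of an ordered $p$-tuple of boards ($r$ of length $n+1$ and $p-r$ of length $n$) tiled by $m_i$-ominoes. The only cosmetic difference is that you rederive the count $s_L$ of $m_i$-omino tilings of an $L$-board by conditioning on the leftmost tile, whereas the paper cites it.
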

\begin{proof}
We identify the following bijection between the tilings of a
$(pn+r)$-board using $v_i$ colours of $(1,p-1;m_i)$-combs and the
tilings of an ordered $p$-tuple of $r$ $(n+1)$-boards followed by
$p-r$ $n$-boards using $v_i$ colours of $m_i$-ominoes. For
convenience we number the boards in this $p$-tuple from 0 to $p-1$ and
the cells in the $(pn+r)$-board from 0 to $pn+r-1$.  Tile board $j$ in
the $p$-tuple with the contents (taken in order) of the cells of the
given $(pn+r)$-board comb tiling whose cell number modulo $p$ is
$j$. The teeth of any $(1,p-1;m_i)$-comb (which will always lie on
consecutive cells with the same cell number modulo $p$) get mapped to
the same colour of $m_i$-omino in board $j$. The procedure is
reversed by splicing the $m_i$-omino tilings of the $p$-tuple of
boards, hence establishing the bijection. The number of
$m_i$-omino tilings of an $n$-board is $s_n$ \cite{BQ=03}. Hence
the number of $m_i$-omino tilings of the $p$-tuple of boards is
$s_{n+1}^rs_n^{p-r}$ and the result follows.
\end{proof}

\begin{corollary}\label{C:bij}
If $A_n$ is the number of ways to tile an $n$-board using $v_i$
colours of
$(1/p,1-1/p;m_i)$-combs where $p$ and the $m_i$ are positive integers and 
$m_1<\cdots<m_q$, then for $n\ge0$,
\[
A_{n}=s_n^p,
\]
where $s_n=v_1s_{n-m_1}+\cdots+v_qs_{n-m_q}+\delta_{n,0}$, $s_{n<0}=0$.
\end{corollary}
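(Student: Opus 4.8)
The plan is to obtain this as the $r=0$ case of Theorem~\ref{T:1combs} by rescaling all lengths by a factor of $1/p$. The essential observation is that a $(1/p,1-1/p;m_i)$-comb is precisely the image of a $(1,p-1;m_i)$-comb under the uniform scaling $x\mapsto x/p$: each tooth of width $1$ becomes a tooth of width $1/p$, each gap of width $p-1$ becomes a gap of width $(p-1)/p=1-1/p$, and the number of teeth $m_i$ is unaffected. The same scaling carries a board of length $pn$ to a board of length $n$, and sends its $pn+1$ integer cell boundaries to the $pn+1$ multiples of $1/p$ lying in $[0,n]$.

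First I would use this scaling to set up a bijection. A tiling of a $pn$-board by $v_i$ colours of $(1,p-1;m_i)$-combs, in which every sub-tile edge lies on an integer boundary, maps tile-for-tile and colour-for-colour to a tiling of an $n$-board by $v_i$ colours of $(1/p,1-1/p;m_i)$-combs, in which every sub-tile edge lies on the $1/p$-grid; the inverse is scaling by $p$. Since a uniform scaling preserves all incidence and ordering relations among the sub-tiles, this is a genuine bijection between the two sets of tilings.

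The count then follows at once: the number of tilings counted by the corollary's $A_n$ equals the number of tilings of a $pn$-board by the unscaled combs, and the latter is the quantity written $A_{pn+r}$ in Theorem~\ref{T:1combs} with $r=0$. Taking $r=0$ in $s_n^{p-r}s_{n+1}^r$ gives $s_n^p$, so $A_n=s_n^p$ for every $n\ge0$, with $s_n$ satisfying the stated recurrence.

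The only delicate point---and where I expect whatever modest difficulty there is to reside---is verifying that the admissible placements correspond exactly: that permitting sub-tiles on the $1/p$-grid in the scaled picture is the faithful image of permitting them on integer boundaries in the length-$pn$ picture, so that the bijection neither creates nor destroys tilings. Once the two grids are identified this is immediate, since the way teeth of distinct combs interleave within one another's gaps depends only on the relative positions of the sub-tiles, and these are left unchanged by the scaling.
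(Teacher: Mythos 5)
Your proposal is correct and follows essentially the same route as the paper, which simply rescales the board and tiles of Theorem~\ref{T:1combs} by a factor of $p$ and keeps only the $r=0$ case. You have merely spelled out in more detail the bijection that the paper's one-line proof leaves implicit.
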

\begin{proof}
Reduce the board and tiles in Theorem~\ref{T:1combs} by a factor of
$p$ lengthwise and discard the non-integer length boards by only
considering the $r=0$ case.
\end{proof}

A \textit{mixed metatile} is a metatile that contains more than one
type of tile \cite{EA20a}. For example, the simplest (and shortest)
mixed metatiles when tiling with half-squares ($h$, which can also be
regarded as $(\frac12,\frac12;1)$-combs) and
$(\frac12,\frac12;m+2)$-combs ($C$) are a $C$ with all the gaps filled
with half-squares (which we refer to as a filled comb) followed by an
$h$ (in order to give a grouping of tiles of integer length) or an $h$
followed by a filled comb (Fig.~\ref{f:hcCmetatiles}). The symbolic
representations of these metatiles are $Ch^{m+2}$ and $hCh^{m+1}$,
respectively.  The fact that there is a pair of simplest mixed
metatiles is a consequence of Lemma~\ref{L:mtpairs}.

\begin{figure}
\begin{center}
\includegraphics[width=12.7cm]{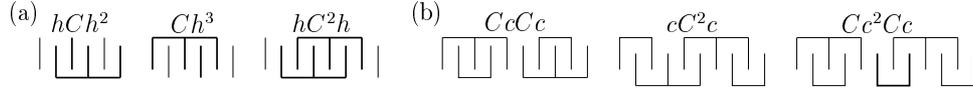}
\end{center}
\caption{Examples of mixed metatiles and their symbolic
  representations when tiling with (a)~half-squares
  ($h$) and $(\frac12,\frac12;m+2)$-combs ($C$)
  (b)~$(\frac12,\frac12;m+1)$-combs ($c$) and $C$ when
  $m=1$. Each upper (lower) vertical line represents a tooth filling a
  left (right) slot. The horizontal lines show which
  teeth are part of the same comb.  Interior tiles are indicated by
  thicker lines. The first two metatiles in each case are examples of
  pairs of metatiles in the sense of Lemma~\ref{L:mtpairs}.}
\label{f:hcCmetatiles}
\end{figure}

We let $\mu_l^{(m_1,m_2)}$ denote the number of mixed
metatiles of length $l$ when tiling with $(\frac12,\frac12;m_1)$- and
$(\frac12,\frac12;m_2)$-combs. When tiling an $n$-board with such
combs or $(\frac12,g)$-fences we refer to the halves of each cell as
\textit{slots}.

\begin{lemma}\label{L:mtpairs}
For $m_2>m_1\ge1$,
the mixed metatiles when tiling with $(\frac12,\frac12;m_1)$- and
$(\frac12,\frac12;m_2)$-combs occur in pairs whereby one element of the
pair is generated from the other by interchanging the contents of the
slots in each cell.
\end{lemma}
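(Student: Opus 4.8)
The plan is to realize the pairing explicitly as a fixed-point-free involution $\sigma$ on the set of mixed metatiles, where $\sigma$ is precisely the operation in the statement: interchange the contents of the left and right slot within every cell. First I would check that $\sigma$ sends valid comb tilings to valid comb tilings. The teeth of any $(\frac12,\frac12;m)$-comb all sit in slots of the same handedness (all left slots, or all right slots, of consecutive cells), so swapping within every cell carries a left-slot $m$-comb on cells $i,\dots,i+m-1$ to a right-slot $m$-comb on the same cells, and conversely. Thus $\sigma$ permutes the teeth while preserving their grouping into combs and the multiset of comb types; in particular $\sigma$ is an involution, and it preserves the ``mixed'' property since no comb changes its number of teeth.

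Next I would verify that $\sigma$ maps metatiles to metatiles of the same length, for which the only thing to check is that $\sigma$ preserves the admissible break positions (integer cell boundaries straddled by no comb). A left-slot $m$-comb on cells $i,\dots,i+m-1$ has extent $[i,i+m-\tfrac12]$ and therefore straddles exactly the integers $i+1,\dots,i+m-1$; its image under $\sigma$ has extent $[i+\tfrac12,i+m]$ and straddles exactly the same integers. Hence every comb blocks the same cell boundaries before and after applying $\sigma$, the break points of the whole grouping are unchanged, and a minimal integer-length grouping stays minimal and integer-length.

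The main obstacle, and the heart of the argument, is showing that $\sigma$ has no fixed point among mixed metatiles, so that the mixed metatiles genuinely split into pairs $\{T,\sigma(T)\}$ with $T\ne\sigma(T)$. Here I would argue as follows. In any metatile the left slot of its leftmost cell $a$ must be filled, and only a left-slot comb can fill it; call that comb $K$, with $|K|=m$ teeth, occupying cells $a,\dots,a+m-1$. If $\sigma(T)=T$, then $\sigma(K)$—a right-slot $m$-comb on the same cells $a,\dots,a+m-1$—also lies in $T$, so $K$ and $\sigma(K)$ together saturate every slot of cells $a,\dots,a+m-1$. Consequently $a+m$ is an admissible break point, whence minimality forces $T=\{K,\sigma(K)\}$; but these two combs have equal tooth count, so $T$ is not mixed, a contradiction. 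Therefore $\sigma$ is fixed-point-free on mixed metatiles and partitions them into the claimed pairs. I expect the most delicate point to be the break-point bookkeeping, in particular confirming that no third comb can straddle $a+m$; this reduces to the observation that cells $a,\dots,a+m-1$ are already saturated by $K$ and $\sigma(K)$ and that no cells precede $a$, so any comb straddling $a+m$ would need a tooth inside that saturated block.
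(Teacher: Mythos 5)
Your proof is correct, and its first two steps coincide in substance with the paper's: you show the slot swap $\sigma$ carries valid comb placements to valid comb placements and preserves metatiles because every comb blocks exactly the same set of cell boundaries before and after the swap, which is precisely the paper's observation that the operation ``does not change which cells any given comb straddles'' (the paper additionally notes that $\sigma$ is the board interchange in the bijection of Theorem~\ref{T:1combs}, which you do not need). Where you genuinely diverge is in establishing that $\sigma$ has no fixed points among mixed metatiles. The paper disposes of this in one sentence by asserting that every mixed metatile contains a cell with teeth from both types of comb, so the swap visibly alters that cell; that assertion is true but is left unproved there. You instead argue structurally: a $\sigma$-fixed metatile would contain, alongside the left-slot comb $K$ occupying the first cell's left slot, its mirror $\sigma(K)$, and these saturate the initial block of cells, creating a break point that by minimality forces the metatile to be the single-type bicomb $K\sigma(K)$, which is not mixed. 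This is a clean, self-contained replacement for the paper's unproved claim, at the cost of a few extra lines; the paper's route, once its claim is justified, identifies more explicitly \emph{which} cell witnesses the change under the swap. Both arguments are sound; yours is the more complete write-up of the one step the paper merely asserts.
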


\begin{proof}
Interchanging the slot contents corresponds to interchanging the
boards in the bijection described in the proof of
Theorem~\ref{T:1combs}. The operation of interchanging slot contents
never changes a metatile into an arrangement of tiles which is not a
metatile because the operation does not change which cells any given
comb straddles.  The operation will result in a distinct metatile
being produced since all mixed metatiles contain at least one cell
that contains teeth from both types of comb.
\end{proof}

\section{Tiling with half-squares and $(\frac12,\frac12;m+2)$-combs
  where $m\ge0$}\label{s:I}

When tiling with half-squares ($h$) and $(\frac12,\frac12;m+2)$-combs
($C$), the simplest metatiles are $h^2$ and $C^2$ (two interlocking
combs that we will refer to as a \textit{bicomb}) and are of length
$1$ and $m+2$, respectively. All other metatiles are mixed.  

\begin{lemma}
When tiling with $h$ and $C$, the symbolic representation of one
member of each pair of mixed metatiles starts and ends with $h$. 
\end{lemma}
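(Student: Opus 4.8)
The plan is to analyze the structure of mixed metatiles when tiling with half-squares $h$ and $(\frac12,\frac12;m+2)$-combs $C$, and to show that within each pair guaranteed by Lemma~\ref{L:mtpairs}, at least one member begins and ends with $h$. First I would recall that a metatile must cover an integer number of cells and, being a metatile, cannot be split into smaller metatiles. Since each tooth and each gap of a comb has width $\frac12$, a single $C$ on its own covers $(m+2)+(m+1)=2m+3$ slots, which is odd, so no metatile can consist of combs alone except the bicomb $C^2$; every other metatile mixing the two tile types must use half-squares to fill the leftover slots and restore integer length. The key observation I would establish is that in any mixed metatile the leftmost slot and the rightmost slot must each be occupied by a half-square rather than by a tooth of a comb.

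The heart of the argument concerns the boundary slots. I would argue as follows. Consider the leftmost occupied slot of a mixed metatile; it lies in the left half of cell $0$. If a comb tooth occupied this leftmost slot, that comb would have its leftmost tooth flush with the left edge of the metatile, and because a $C$ spans $2m+3$ slots while the metatile has even slot-length (being of integer cell-length, hence an even number of slots), the parity and alignment constraints force the configuration of the remaining teeth and gaps. The point I would make is that whether the boundary slot is filled by $h$ or by a tooth is precisely the degree of freedom that the slot-interchange operation of Lemma~\ref{L:mtpairs} toggles: interchanging the contents of the slots in the boundary cell swaps an $h$ in the left slot with a tooth in the right slot (or vice versa). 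Since the two members of the pair differ exactly by this interchange, and since a mixed metatile cannot have teeth occupying both the extreme left and extreme right boundary slots simultaneously without failing to be a minimal metatile (it would decompose or force a longer structure), at least one of the two members must present $h$ at both ends.

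The main obstacle I anticipate is making rigorous the claim that a mixed metatile cannot have its two outermost slots both filled by comb teeth, and dually that the interchange operation reliably moves an $h$ to the boundary. To handle this I would carefully track, for each of the two interlocking combs that a mixed metatile necessarily contains (by the pairing structure and the fact that mixed metatiles have teeth from both tile types in some cell), where its first tooth sits relative to the cell boundaries. Because a comb tooth occupying a slot forces its partner teeth at fixed offsets of $m+2$ slots, and because the metatile must close up to integer length with no overhang, I would show that the two combs in the pair are offset by exactly one slot, so that interchanging slot contents exchanges which comb sits ``first.'' This exchange places a half-square in the leftmost slot of one member; the symmetric argument at the right boundary, together with the integer-length closure condition, yields $h$ at both ends for that member. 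I expect the bookkeeping of slot offsets and the verification that the resulting configuration is genuinely a single indivisible metatile (rather than a juxtaposition of $h^2$ and a shorter mixed piece) to be the delicate part, and I would settle it by appealing to the explicit symbolic forms $Ch^{m+2}$ and $hCh^{m+1}$ exhibited in Fig.~\ref{f:hcCmetatiles} as the base case and arguing that all longer mixed metatiles arise by inserting additional $C$'s whose teeth interleave without disturbing the half-square caps at the two ends.
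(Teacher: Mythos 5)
Your central claim --- that in a mixed metatile the leftmost and rightmost \emph{slots} must each be occupied by a half-square (in at least one member of the pair) --- is false, and the lemma cannot be reached this way. The metatile $hCh^{m+1}$ in Fig.~\ref{f:hcCmetatiles}(a) already refutes it: its rightmost slot is occupied by the last tooth of the comb, and its final $h$ sits in the \emph{left} slot of the last cell, inside the comb's last gap. The correct unit of analysis is the cell, not the slot: the first and last cells of a mixed metatile must each \emph{contain} an $h$ in one of their two slots, because if both slots of the first (or last) cell held comb teeth, two interlocking combs would start (end) there and form a bicomb $C^2$, which is itself a metatile, contradicting indivisibility. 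The slot interchange of Lemma~\ref{L:mtpairs} then lets you choose the member of the pair in which the first cell's $h$ occupies the left slot, so the symbolic representation starts with $h$.

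What your proposal never addresses is why the representation also \emph{ends} with $h$ for that same member. Since the $h$ in the last cell may land in either slot, you must argue that even when it lands in the left slot --- inside the final gap of a comb whose last tooth fills the right slot --- it is still the \emph{last symbol} in the representation; this holds because symbols are ordered by the starting slot of each tile, and that $h$ starts after the comb does. Your attempt to substitute for this the claim that the two outermost slots cannot both be comb teeth is both unproven (you defer the ``delicate part'' to an appeal to inserting additional $C$'s into the base cases, which is not an argument) and beside the point, since the lemma concerns the order of symbols in the symbolic representation rather than which tile touches the boundary slots.
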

\begin{proof}
The first and last cells of a mixed metatile must contain an $h$ since
otherwise it would be a bicomb. From Lemma~\ref{L:mtpairs}, one member
of the pair starts with $h$. If the $h$ in the final cell is not in
the right-hand slot, it will be inside a gap in a comb and so the
corresponding $h$ in the symbolic representation will still be at the
end.
\end{proof}

We refer to all tiles in a mixed metatile other than the initial and
final $h$ as \textit{interior tiles}. The total length of the
sub-tiles of the interior tiles of a mixed metatile of length $l$
is therefore $l-1$.

A systematic way to generate the symbolic representation of all
possible metatiles is via a directed pseudograph (henceforth referred
to as a digraph) in which each arc represents the addition of a tile
or tiles starting at the next available slot and each node (apart from
the starting 0~node which represents the initial empty board or completed
metatile) corresponds to a particular configuration of the partially
occupied slots starting at the first empty slot of the incomplete
metatile \cite{EA15}. Each metatile corresponds to a walk starting and
finishing at the 0~node without passing through it in between.  The
nodes are identified by binary strings.  A 0 (a 1) in the string
represents an empty (a filled) slot and a string starting with
$\bar{0}$ indicates that the first empty slot is a right-hand side one.

We obtain all configurations of interior tiles by modifying the
digraph approach for finding metatiles. There can be no 0~node in the
digraph since it represents the beginning or end of the whole
metatile. To generate the symbolic representation of the interior
tiles of the member of each possible pair of mixed metatiles starting
with an $h$ we instead start at the $\bar{0}$~node since an $h$ has
already been placed in the first slot of the board. We can end either
at the $\bar{0}$~node which means the final $h$ needed to complete the
mixed metatile will be placed in the right slot, or at the 01~node
whereby the final $h$ will lie in the last gap of the final comb in
the mixed metatile. We refer to these two nodes as \textit{exit
  nodes}.  We would, incidentally, generate the other members of the
pairs of mixed metatiles by instead starting at the 01~node which in
this case would correspond to an $h$ filling the right slot of the
first cell on the board.

The digraphs for generating interior tiles when tiling with $h$ and
$C$ are given in Fig.~\ref{f:hm+1-digraphs}. We use $(01)^2$ to mean
0101, etc., and $(x)^0$ for any $x$ means the empty string.  With this
binary string notation we can represent a $(\frac12,\frac12;p+1)$-comb
by $1(01)^p$. It is then easily seen that an arc representing such a
comb leaving a $(01)^q$~node ($(\bar{0}1)^q$~node) will end at a
$(\bar{0}1)^{\abs{p-q}}\bar{0}$~node ($(01)^{\abs{p-q}}$~node)
where we omit the final $\bar{0}$ when $p\neq q$.  Notice
that there is no $h$ arc leaving either exit node
since this would result in a completed metatile and the corresponding
$h$ would therefore not be an interior tile.

The nodes in the digraph for generating interior tiles can be grouped
into the pairs $(\bar{0}1)^p\bar{0}$ and $(01)^{p+1}$ for $p=0,\ldots,m$
where we omit the final $\bar{0}$ when $p\neq0$. The members of each
pair are equivalent in the sense that the arc from node $A$ to node
$B$ is of the same type as the arc from node $\tilde{A}$ to node
$\tilde{B}$ where $X$ and $\tilde{X}$ are a pair of nodes. This is
apparent from the fact that rotating the digraphs as depicted in
Fig.~\ref{f:hm+1-digraphs} by 180$^\circ$ maps each node into the
other member of its pair and the types and directions of the arcs
remain unchanged by the rotation.

\begin{figure}
\begin{center}
\includegraphics[width=12.7cm]{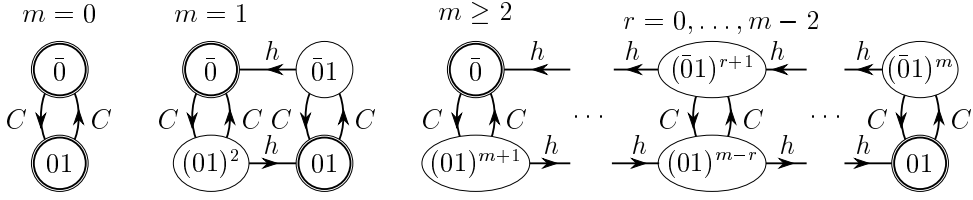}
\end{center}
\caption{Digraphs for generating configurations of interior tiles when
  tiling with half-squares ($h$) and
  $(\frac12,\frac12;m+2)$-combs ($C$).}
\label{f:hm+1-digraphs}
\end{figure}

To show the bijection in the following lemma, in
Fig.~\ref{f:FfS-digraphs} we present the digraphs for obtaining the
symbolic representations of the tilings of an $n$-board using the fences
$F_m$ ($F$), $\bar{F}_1$ ($\bar{f}$), and $\bar{F}_0$ which is just a
square that straddles a cell boundary ($\bar{S}$). Again using our
binary string notation to label nodes, by $\bar{0}^21$ we mean
$\bar{0}01$, etc. The tiles $F$, $\bar{f}$, $\bar{S}$ can be viewed as
the strings $10^{2m}1$, $\bar{1}001$, and $\bar{1}1$, respectively,
where the absence (presence) of a bar on the first digit means that
the left sub-tile of the fence must be placed in the left (right) slot
of a cell.  Since the only tile that can start in a left slot is $F$,
this is the only arc leaving the 0 node. From a $\bar{0}^{2p}1$ node
for $p=1,\ldots,m$ there are two options: adding $\bar{S}$
($\bar{f}F\bar{f}^{p-1}$) leaves $\bar{0}^{2(p-1)}$
($\bar{0}^{2(m-p)}1$). To understand the latter case, note that adding
$\bar{f}$ results in an empty slot that can only be filled by the left
sub-tile of an $F$. This addition of $\bar{f}F$ then leaves
$\bar{0}^{2(m-1)}$ if $p=1$ and $\bar{0}10^{2(p-2)}10^{2(m-p)+1}1$
otherwise.  In the $p>1$ case, it is easily seen that the first
$2(p-1)$ empty slots must be filled by $p-1$ $\bar{f}$ which leaves
$\bar{0}^{2(m-p)}1$.

A tiling of an $n$-board with $(\frac12,g)$-fences
corresponds to any walk that begins and ends at the 0~node and
contains a total of $n$ tiles since the total contribution to the
length from the two sub-tiles making up any $(\frac12,g)$-fence tile
is 1.

\begin{figure}
\begin{center}
\includegraphics[width=12.7cm]{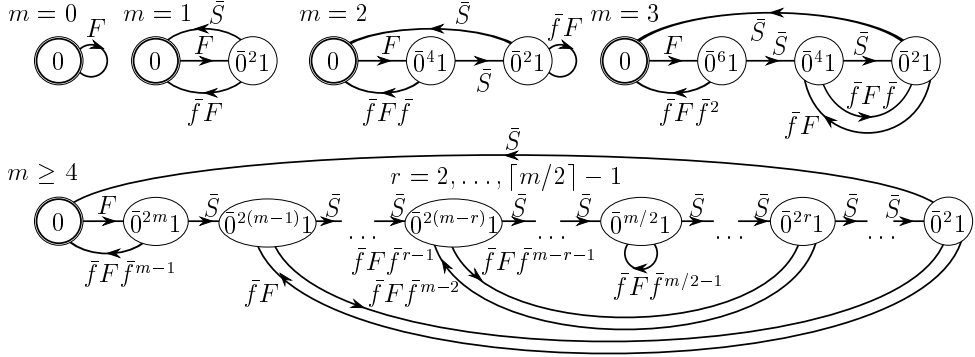}
\end{center}
\caption{Digraphs for tiling using $F_m$ ($F$),
  $\bar{F}_1$ ($\bar{f}$), and $\bar{F}_0$ ($\bar{S}$). The $\bar{0}^{m/2}1$
node is only present if $m$ is even.}
\label{f:FfS-digraphs}
\end{figure}

\begin{lemma}\label{L:bijI}
There is a bijection between the possible pairs of configurations of
interior tiles of total length $n$ (not counting the gap when leaving
the digraph at the 01~node) when tiling
with half-squares and $(\frac12,\frac12;m+2)$-combs and the ways to
tile an $n$-board using $F$, $\bar{f}$, and $\bar{S}$.
\end{lemma}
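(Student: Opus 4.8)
The plan is to establish a bijection between two combinatorial objects by matching their generating digraphs node-for-node and arc-for-arc. Both objects are counted by walks in a directed pseudograph, so if I can exhibit an isomorphism between the two digraphs that preserves the contribution to total length, the bijection follows immediately. Specifically, I would compare the digraph for generating interior tiles (Fig.~\ref{f:hm+1-digraphs}), read starting at the $\bar{0}$~node and ending at either exit node, against the digraph for tiling with $F$, $\bar{f}$, and $\bar{S}$ (Fig.~\ref{f:FfS-digraphs}), read as walks from the $0$~node back to itself.

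First I would set up the correspondence between nodes. The interior-tile digraph has its nodes grouped into the pairs $(\bar{0}1)^p\bar{0}$ and $(01)^{p+1}$ for $p=0,\ldots,m$, and by the $180^\circ$ rotational symmetry noted just before the lemma, the two members of each pair carry identical arc structure; so I would collapse each pair to a single representative, giving $m+1$ states. The fence digraph has the $0$~node together with the $\bar{0}^{2p}1$~nodes for $p=1,\ldots,m$, again $m+1$ states once the entry conventions are accounted for. I would then match $(\bar{0}1)^p\bar{0}$ (equivalently its partner $(01)^{p+1}$) with the appropriate $\bar{0}^{2p}1$~node, sending the starting $\bar{0}$~node of the interior digraph to the $0$~node of the fence digraph.

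Next I would verify that the arcs correspond under this matching, checking each arc type against the explicit transition rules already derived in the text. On the interior side, an arc representing a $(\frac12,\frac12;p+1)$-comb (the string $1(01)^p$) sends a $(01)^q$~node to a $(\bar{0}1)^{|p-q|}\bar{0}$~node and a $(\bar{0}1)^q$~node to a $(01)^{|p-q|}$~node; on the fence side, from a $\bar{0}^{2p}1$~node the choices $\bar{S}$ and $\bar{f}F\bar{f}^{p-1}$ lead to $\bar{0}^{2(p-1)}$ and $\bar{0}^{2(m-p)}1$ respectively. I would match these transitions so that each comb-arc of a given size corresponds to a fence-move of the matching size, and confirm that the two exit nodes of the interior digraph correspond to the two ways the fence walk can return to the $0$~node. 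The crucial accounting is on lengths: an interior configuration of total sub-tile length $n$ must map to a tiling by $F$, $\bar{f}$, $\bar{S}$ containing exactly $n$ tiles, using the fact that each $(\frac12,g)$-fence contributes $1$ to the length (as stated in the paragraph preceding the figure) and that the gap omitted at the $01$~node is exactly compensated in the matching.

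I expect the main obstacle to be bookkeeping rather than conceptual: making the node-pairing precise enough that every arc type (including the $h$-arcs on the interior side and the composite moves like $\bar{f}F\bar{f}^{p-1}$ on the fence side) is shown to correspond, and confirming that the length/tile-count is preserved uniformly across all $m+1$ states, including the boundary cases $p=0$, $p=1$, and $p=m$ where the transition rules degenerate. The parity remark in the caption of Fig.~\ref{f:FfS-digraphs} (the extra node present only when $m$ is even) suggests I must also treat the even and odd $m$ cases with care to ensure the digraph isomorphism is genuinely exact in both parities.
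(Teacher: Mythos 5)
Your overall strategy --- matching the two digraphs node-for-node and arc-for-arc, pairing the $\bar{0}^{2p}1$~node of the fence digraph with the $(\bar{0}1)^p\bar{0}$/$(01)^{p+1}$ pair of the interior-tile digraph and the $0$~node with the start/exit nodes --- is exactly the paper's, and that part of the plan is sound. The genuine gap is in the length accounting, which you correctly flag as the crux but then propose to settle by arranging that ``each comb-arc of a given size corresponds to a fence-move of the matching size.'' That cannot work arc-by-arc: an $h$ arc contributes $\tfrac12$ to the interior length while the corresponding $\bar{S}$ contributes one tile (hence length $1$) on the fence board, and a $C$ arc contributes $\tfrac{m+2}{2}$ while its fence counterpart is sometimes a single $F$ (one tile) and sometimes the composite $\bar{f}F\bar{f}^{\,p-1}$ ($p+1$ tiles). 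Only the totals over complete walks agree, not the individual increments, so a per-arc length-preservation claim is false.

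The paper closes this gap with a non-local argument: it verifies base cases ($C^2\leftrightarrow F$ followed by $\bar{f}F\bar{f}^{\,m-1}$, both of total length $m+2$; and $Ch^m\leftrightarrow F\bar{S}^m$, both of length $m+1$), and then shows that every other walk arises from $F\bar{S}^m$ by a sequence of modifications, each of which changes the fence tile count and the comb interior length by the same amount $p+1$ (treating separately the ranges $p\le\lceil m/2\rceil-1$ and $p\ge\lfloor m/2\rfloor+1$ together with the even-$m$ self-loop). Some such incremental or global bookkeeping is unavoidable here; your step ``confirm that the length/tile-count is preserved uniformly across all $m+1$ states'' is precisely the missing content, and the boundary cases you worry about ($p=0,1,m$ and the parity of $m$) are symptoms of this deeper issue rather than mere bookkeeping.
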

\begin{proof}
For any walk that starts and finishes at the 0~node there is exactly
one walk that starts at the $\bar{0}$ node and ends at an exit node
such that each $\bar{S}$ arc (arc containing an $F$) corresponds to an
$h$ ($C$) arc in the interior tile digraph. This can be seen from the
fact the $\bar{0}^{2p}1$ node (for $p=0,\ldots,m$ with the $p=0$ case
understood to mean the 0 node) in the fence digraph corresponds to the
$(01)^{p+1}$ pair of nodes in the interior tile digraph in the sense
that if there is an $\bar{S}$ arc (arc containing an $F$) from node
$A$ to node $B$ then there is an $h$ ($C$) arc from $\tilde{A}$ to
$\tilde{B}$, where $\tilde{X}$ denotes a member of the pair of 
interior tile digraph nodes
corresponding to the fence digraph node $X$.  We now show that the
corresponding tilings are of the same length. The total
length of the sub-tiles in a $C$ is $(m+2)/2$. For $m>1$, $C^2$
corresponds to $F$ followed by $\bar{f}F\bar{f}^{m-1}$ which are both
tilings of length $m+2$. For all $m$, $Ch^m$ (a comb with all but the
last gap filled) corresponds to $F\bar{S}^m$ and these tilings are of
length $m+1$, not including the unfilled gap. All metatiles containing
at least one $\bar{S}$ can be obtained by modifying $F\bar{S}^m$ in a
series of steps which each involve adding $\bar{f}F\bar{f}^{m-p-1}$
for some $p$ and adding or removing an appropriate number of
$\bar{S}$. If the $\bar{0}^{2(m-p)}1$ node for any $p=1,\ldots,\lceil
m/2\rceil-1$ is left via the $\bar{f}F\bar{f}^{m-p-1}$ arc then the
number of $\bar{S}$ bypassed is $m-2p$ and so the net increase in the
number of tiles is $m-p+1-(m-2p)=p+1$. For $p=\lfloor
m/2\rfloor+1,\ldots,m-1$, after leaving the $\bar{0}^{2(m-p)}1$ node
we need to add $2p-m$ $\bar{S}$ to return to it. The total number of
tiles thus increases by $m-p+1+2p-m=p+1$. Finally, for even $m$, if
$p=m/2$, executing the $\bar{0}^{m/2}1$ node loop once also gives
$p+1$ extra tiles. The $\bar{0}^{2(m-p)}1$ node corresponds to the
$(01)^{m-p+1}$ pair of nodes in the comb tiling. Leaving either node
of the pair via a $C$ arc results in bypassing $m-p$ $h$ and gaining
$p$ $h$ before an exit node is reached. The total length of tiles
added is thus $\frac12m+1+\frac12(p-(m-p))=p+1$.
\end{proof}

\begin{theorem}\label{T:mu1}
For $m\ge0$ and $l>1$,
\begin{equation}\label{e:mu1}
\mu_l^{(1,m+2)}=2P_{l-1}^{\{-2,-1,m\}}.
\end{equation}
\end{theorem}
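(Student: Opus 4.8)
The plan is to express the count of mixed metatiles as a permanent by chaining together the structural results already established, so that the factor of $2$ appears for free and the combinatorial content is pushed entirely onto the bijection of Lemma~\ref{L:bijI}. The factor of $2$ comes from Lemma~\ref{L:mtpairs}: every mixed metatile sits in a pair obtained by swapping slot contents, so it suffices to count one representative per pair and then double. First I would invoke the earlier (unlabelled) lemma stating that each pair has a representative whose symbolic representation starts and ends with $h$, so that counting pairs of length $l$ reduces to counting these distinguished representatives of length $l$.

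Next I would strip off the leading and trailing $h$. As observed immediately after that lemma, the sub-tiles of the interior tiles of a mixed metatile of length $l$ have total length $l-1$. Hence the number of distinguished representatives of length $l$ equals the number of interior-tile configurations of total length $l-1$. This is exactly the quantity controlled by Lemma~\ref{L:bijI}, which provides a bijection between interior-tile configurations of total length $n$ and tilings of an $n$-board using the fences $F=F_m$, $\bar{f}=\bar{F}_1$, and $\bar{S}=\bar{F}_0$. Setting $n=l-1$ converts the problem into counting fence tilings of an $(l-1)$-board.

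The last step is to recognize these fence tilings as a permanent via the fence-to-$W$ dictionary recalled in the introduction: $F_g$ contributes the nonnegative element $g$, while $\bar{F}_{g-1}$ contributes the negative element $-g$. Applying this to $F_m$, $\bar{F}_1$, $\bar{F}_0$ yields the elements $m$, $-2$, $-1$, so $W=\{-2,-1,m\}$; by the bijection of \cite{EA15} the number of such tilings of an $(l-1)$-board is $P_{l-1}^{\{-2,-1,m\}}$. Assembling the chain gives $\mu_l^{(1,m+2)}=2P_{l-1}^{\{-2,-1,m\}}$, and the hypothesis $l>1$ simply ensures we are in the genuinely mixed regime (the non-mixed metatiles $h^2$ and the bicomb $C^2$ carry no interior $h$ and are not counted).

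Because the delicate length-bookkeeping has already been absorbed into Lemma~\ref{L:bijI}, the proof of the theorem is essentially an assembly of established facts, and I do not anticipate a serious obstacle. The one point demanding care is the correct reading of the fence-to-$W$ dictionary: one must remember that $\bar{F}_{g-1}$ corresponds to $-g$ rather than $-(g-1)$, so that $\bar{F}_1$ and $\bar{F}_0$ give $-2$ and $-1$ instead of $-1$ and $0$. Getting this offset right is precisely what fixes the nonzero diagonals at $-2,-1,m$ and hence pins down the exact permanent on the right-hand side.
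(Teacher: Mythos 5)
Your proposal is correct and follows essentially the same route as the paper: invoke the fence-to-$W$ dictionary from \cite{EA15} to identify tilings of an $(l-1)$-board by $F_m$, $\bar{F}_1$, $\bar{F}_0$ with $P_{l-1}^{\{-2,-1,m\}}$, then apply Lemma~\ref{L:bijI} together with the observation that the interior tiles of a mixed metatile of length $l$ have total length $l-1$. You are merely more explicit than the paper about where the factor of $2$ originates (Lemma~\ref{L:mtpairs} and the choice of the representative starting and ending with $h$), which the paper absorbs into the word ``pairs'' in Lemma~\ref{L:bijI}.
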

\begin{proof}
As there is a bijection between permutations $\pi$ of $\Nset_n$
satisfying $\pi(i)-i\in\{-2,-1,m\}$ and tilings of an $n$-board with
$\bar{f}$, $\bar{S}$, and $F$, it follows that $P_{l-1}^{\{-2,-1,m\}}$
is the number of ways to tile an $(l-1)$-board with $F$, $\bar{f}$,
and $\bar{S}$. By Lemma~\ref{L:bijI} and the fact that the total
length of the interior tiles is 1 less than the complete metatile,
$P_{l-1}^{\{-2,-1,m\}}$ equals the number of pairs of mixed metatiles
of length $l$.
\end{proof}

We let $s_n^{(p,q)}$ denote the $n$th term in the sequence given by
$s_n^{(p,q)}=\delta_{0,n}+s_{n-p}^{(p,q)}+s_{n-q}^{(p,q)}$,
$s_{n<0}^{(p,q)}=0$. However, for ease of reading, in the rest
of this section we write $s_n$ and $\mu_l$ instead of $s_n^{(1,m+2)}$
and $\mu_l^{(1,m+2)}$, respectively.  The proofs of the following
identities are similar to those given in \cite{EA20a} which are in turn
based on techniques expounded in \cite{BQ=03}.

\begin{identity}\label{I:gen1}
For $n\ge0$,
\begin{equation}\label{e:gen1}
s_n^2=\delta_{n,0}+s_{n-1}^2
+s_{n-m-2}^2+2\!\!\!\sum_{l=m+2}^n\!\!\!P_{l-1}^{\{-2,-1,m\}}s_{n-l}^2.
\end{equation}
\end{identity}
\begin{proof}
As in \cite{BHS03,EA15,EA19}, we condition on the 
metatile at the end of the board. If it is of length $l$ there will be
$A_{n-l}$ ways to tile the remaining $n-l$ cells.  There is one
metatile of length 1 ($h^2$), the $C^2$ metatile is of length $m+2$,
and there are $\mu_l$ mixed metatiles
of length $l$ for each $l\geq m+2$.  If $n=l$ there is exactly one tiling
corresponding to that final metatile so we make $A_0=1$. There is no
way to tile an $n$-board if $n<l$ and so $A_{n<0}=0$. Thus
\[
A_{n}=\delta_{n,0}+A_{n-1}+A_{n-m-2}+\sum_{l=m+2}^n\mu_lA_{n-l}.
\]
Applying Corollary~\ref{C:bij} and Theorem~\ref{T:mu1} to this
gives \eqref{e:gen1}.
\end{proof}

\begin{identity}\label{I:n+m+2}
For $n\ge0$,
\begin{equation}\label{e:n+m+2}
s_{n+m+2}^2-1=\sum_{k=0}^n\left\{s_{k}^2+2\sum_{i=0}^kP_{k+m+1-i}^{\{-2,-1,m\}}s_{i}^2\right\}.
\end{equation}
\end{identity}
\begin{proof}
How many ways are there to tile an $(n+m+2)$-board using at least 1
comb? \textit{Answer~1}: $A_{n+m+2}-1$ since this corresponds to all
tilings except the all-$h$ tiling. \textit{Answer~2}: condition on the
location of the last comb. Suppose this comb starts on cell $k+1$ and
therefore ends on cell $k+m+2$ ($k=0,\ldots,n$). Either it is part of
a bicomb which covers cells $k+1$ to $k+m+2$ and so there are $A_k$
ways to tile the remaining cells, or the cells are at the end of a
mixed metatile and so there are
$\mu_{m+2}A_{k}+\mu_{m+3}A_{k-1}+\cdots+\mu_{k+m+2}A_0$ ways to tile
the remaining cells. Hence, equating the two answers,
\[
A_{n+m+2}-1=\sum_{k=0}^n\left\{A_k+\mu_{k+m+2}A_0+\mu_{k+m+1}A_1+\cdots+\mu_{m+3}A_{k-1}+\mu_{m+2}A_k\right\}.
\]
The identity then follows from \eqref{e:mu1} and Corollary~\ref{C:bij}.
\end{proof}

\begin{identity}\label{I:n(m+2)+jI}
For $n\ge0$ and $j=0,\ldots,m+1$,
\begin{equation}\label{e:n(m+2)+jI}
s_{n(m+2)+j}^2=\delta_{j,0}+(1-\delta_{j,0})s_{j-1}^2+\sum_{k=1}^n\left\{s_{k(m+2)+j-1}^2
+2\!\!\!\!\!\!\!\!\sum_{i=0}^{(k-1)(m+2)+j}\!\!\!\!\!\!\!\!P_{k(m+2)+j-1-i}^{\{-2,-1,m\}}s_i^2\right\}.
\end{equation}
\end{identity}
\begin{proof}
How many ways are there to tile an $(n(m+2)+j)$-board using at least
one $h$?  \textit{Answer~1}: $A_{n(m+2)+j}-\delta_{0,j}$ since the
all-bicomb tiling only occurs when $j=0$. \textit{Answer~2}: condition
on the location of the final $h$. This must lie in a cell whose number
modulo $m+2$ equals $j$ since the cells after this, if any, must be
filled with bicombs. Suppose that this $h$ is in cell $k(m+2)+j$
($k=\delta_{j,0},\ldots,n$). Either it is part of $h^2$ and so there
are $A_{k(m+2)+j-1}$ ways to tile the remaining cells, or it is part
of a mixed metatile in which case there are
$\mu_{m+2}A_{k(m+2)+j-m-2}+\mu_{m+3}A_{k(m+2)+j-m-3}+\cdots+\mu_{k(m+2)+j}A_0$
ways to tile the remaining cells. In the latter case, evidently, $k$
cannot be zero. Hence, equating the answers,
\begin{multline*} 
A_{n(m+2)+j}-\delta_{j,0}=\sum_{k=\delta_{j,0}}^nA_{k(m+2)+j-1}\\
\mbox{}+\sum_{k=1}^n\big(\mu_{k(m+2)+j}A_0+\mu_{k(m+2)+j-1}A_1+\cdots+\mu_{m+2}A_{(k-1)(m+2)+j}\big).
\end{multline*} 
Then \eqref{e:n(m+2)+jI} follows
from \eqref{e:mu1} and Corollary~\ref{C:bij}.
\end{proof}

In the following identity we use the fact that the number of ways to
tile an $n$-board using only $h^2$ and $C^2$ is $s_n$ since
this is equivalent to tiling an $n$-board with squares and
$(m+2)$-ominoes \cite{BCCG96,BQ=03}.

\begin{identity}\label{I:s2-sI}
For $n\ge0$,
\begin{equation}\label{e:s2-sI}
s_{n}^2=s_{n}+2\sum_{k=0}^{n-m-2}\sum_{r=m+2}^{n-k}
P_{r-1}^{\{-2,-1,m\}}s_{k}s_{n-k-r}^2.
\end{equation}
\end{identity}
\begin{proof}
How many ways are there to tile an $n$-board using at least 1 mixed
metatile? \textit{Answer~1}: $A_n-s_n$ since $s_n$
is the number of ways to tile an $n$-board without using mixed
metatiles.  \textit{Answer~2}: condition on the position of the first
mixed metatile. If it lies on cells $k+1$ to $k+r$ where
$k=0,\ldots,n-r$ and $r=m+2,\ldots,n-k$, there are
$s_k\mu_rA_{n-k-r}$ ways to tile the board.
Summing over all possible $k$ and $r$ and equating to Answer~1 gives
\[
A_n-s_n=\!\!\!\sum_{\substack{k\ge0,\,r\ge m+2,\\k+r\le n}}\!\!\!
s_k\mu_rA_{n-k-r}.
\]
After re-expressing the right-hand side as a double sum,
the identity follows from \eqref{e:mu1} and Corollary~\ref{C:bij}.
\end{proof}

Note that when $m=0$, since $P_n^{\{-2,-1,0\}}=1$ and
$s_n^{(1,2)}=F_{n+1}$ where $F_n$ is the $n$th Fibonacci number,
Identities \ref{I:gen1}, \ref{I:n+m+2}, \ref{I:n(m+2)+jI} with $j=1$,
and \ref{I:s2-sI} reduce, respectively, to Identities 4.1, 4.2, and
4.3 in \cite{EA19} and Identity~2.10 in \cite{EA20}.

\section{Tiling with $(\frac12,\frac12;m+1)$-  and
  $(\frac12,\frac12;m+2)$-combs where $m\ge1$}\label{s:II}

We use $c$ and $C$ to denote $(\frac12,\frac12;m+1)$- and
$(\frac12,\frac12;m+2)$-combs, respectively. The simplest metatiles
are $c^2$ and $C^2$ which have lengths $m+1$ and $m+2$,
respectively. All other metatiles are mixed.

\begin{lemma}
When tiling with $c$ and $C$, the symbolic representation of any mixed
metatiles begins with $Cc$ or $cC$ and ends with $Cc$.
\end{lemma}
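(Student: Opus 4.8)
The plan is to argue separately about the first and the last cells of a mixed metatile, keeping track of each comb by its \emph{start slot} (the position of its leftmost tooth). Since the symbolic representation records the combs in the order in which their leftmost teeth are encountered, this is exactly the order of increasing start slot. I will use repeatedly that a comb occupies alternate slots from its leftmost tooth, leaving its gaps in between, and that for $m\ge1$ both $c$ and $C$ have at least two teeth (namely $m+1$ and $m+2$).

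For the opening, the leftmost slot of the board must hold the first tooth of some comb, and that comb leaves a gap in the right slot of the first cell. This gap can only be filled by a second comb whose leftmost tooth sits there. If the two combs have the same type they interlock to fill the first $m+1$ (respectively $m+2$) cells exactly, producing the complete bicomb $c^2$ (respectively $C^2$), which is not mixed. Hence in a mixed metatile the first two combs differ, so the representation begins with $cC$ or $Cc$.

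For the closing I would concentrate on the last cell. Its right slot is the final slot of the board, so it is the last tooth of a comb $Z$ whose teeth lie in right slots and which ends exactly at the board's right edge; its left slot is the last tooth of a comb $Y$ whose teeth lie in left slots, since such a comb cannot protrude past the board. In every case $Y$ and $Z$ together occupy both slots of the last $m+1$ cells, so no other comb reaches into those cells; moreover any comb that touches the preceding cell must terminate there, because its next tooth would collide with a tooth of $Y$ or $Z$, and this pushes its start slot strictly to the left of those of $Y$ and $Z$. Hence $Y$ and $Z$ are precisely the last two combs in symbolic order. That they are of different types follows from a terminal-bicomb argument as in the opening: equal types would interlock into a bicomb on the last $m+1$ or $m+2$ cells, leaving a clean cell boundary in front of it, so the metatile would split, contradicting indivisibility and mixedness. (Alternatively, left-to-right reflection is an involution on mixed metatiles carrying the first two combs to the last two, which transfers the ``different types'' conclusion from the opening.)

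It then remains to order $Y$ and $Z$, and this is the crux. For a board of $N$ cells the $C$ among the two starts at slot $2N-2m-4$ or $2N-2m-3$ according as its teeth lie in left or right slots, while the $c$ starts at slot $2N-2m-2$ or $2N-2m-1$; in every combination the $C$ starts strictly earlier, since its extra tooth makes it reach back a full cell further than the $c$, which outweighs the one-slot shift coming from the left/right alignment. Thus the longer comb always precedes the shorter, and the representation ends with $Cc$. I expect the main obstacle to be exactly this bookkeeping: confirming that $Y$ and $Z$ genuinely are the last two combs (the collision argument above) and that the $C$'s one-tooth head start is never cancelled by the half-cell offset. The opening claim and the ``different types'' claim are comparatively routine.
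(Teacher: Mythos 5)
Your proof is correct and follows essentially the same route as the paper: rule out a leading or trailing bicomb because $c^2$ and $C^2$ are themselves metatiles (so their presence at an end would make the metatile splittable), and then fix the order of the final two combs. The only difference is that where the paper simply observes from its figure that the penultimate comb is always a $C$, you justify this with the explicit start-slot comparison showing the terminal $C$ always begins strictly before the terminal $c$; that bookkeeping is sound.
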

\begin{proof}
A metatile cannot start or end with a $C^2$ or $c^2$ as these are
themselves metatiles. The penultimate comb is always a $C$ whether or
not the tooth right at the end of the tile belongs to the final $c$
in the tiling (Fig.~\ref{f:hcCmetatiles}(b)).
\end{proof}

\begin{corollary}
The smallest mixed metatiles when tiling with $c$ and $C$ are $CcCc$
and $cC^2c$ and are of length $2m+3$.
\end{corollary}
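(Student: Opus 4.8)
The plan is to combine the preceding lemma, which pins down how a mixed metatile must begin and end, with the elementary observation that in a completed metatile of length $l$ every one of the $2l$ slots is occupied by exactly one tooth, so the total tooth-width equals $l$. Since a $C$ contributes $m+2$ teeth and a $c$ contributes $m+1$ teeth, each of width $\tfrac12$, a metatile built from $a$ copies of $C$ and $b$ copies of $c$ has length $\tfrac12\bigl(a(m+2)+b(m+1)\bigr)$, which is strictly increasing in both $a$ and $b$. Hence among mixed metatiles (for which $a,b\ge1$) the shortest are exactly those using the fewest combs, and it suffices to determine the least number of combs a mixed metatile can contain.

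First I would dispose of the two- and three-comb cases. By the preceding lemma a mixed metatile ends with $Cc$, so with two combs it must be $Cc$, and with three combs it is $CCc$ or $cCc$; but $CCc$ begins with $CC$ and is excluded by the lemma, leaving $Cc$ and $cCc$. Because each comb is placed with its first tooth in the first empty slot, each string determines a unique configuration, which I would trace slot by slot. For $Cc$, placing $C$ at slot $0$ and $c$ at slot $1$ fills slots $0,\dots,2m+2$, so the first empty slot is $2m+3$. For $cCc$, the leading $c$ and $C$ fill the block $0,\dots,2m+1$ together with the isolated slot $2m+3$ (the last tooth of the $C$), and the final $c$, placed at the first empty slot $2m+2$, pushes the first empty slot out to $2m+5$. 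In both cases the first empty slot is odd, hence not a cell boundary, so neither string is a completed metatile. Consequently every mixed metatile uses at least four combs; by the tooth count such a metatile has length at least $\tfrac12\bigl(2(m+2)+2(m+1)\bigr)=2m+3$, with equality only when $a=b=2$, while any string of five or more combs forces total teeth at least $5(m+1)+1>4m+6$ and hence length strictly greater than $2m+3$.

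Next I would enumerate the four-comb candidates allowed by the lemma. A four-comb string ending in $Cc$ and beginning in $Cc$ or $cC$ is either $CcCc$ or $cCCc=cC^2c$, and tracing the slots confirms that each closes up: in $CcCc$ the two interlocking pairs fill slots $0,\dots,2m+2$ and then $2m+3,\dots,4m+5$, returning the first empty slot to the cell boundary $4m+6$, and $cC^2c$ fills the same $4m+6$ slots. Each uses two $C$'s and two $c$'s and so has length $2m+3$, as claimed. As a consistency check, the slot-swapping operation of Lemma~\ref{L:mtpairs} carries $CcCc$ to $cC^2c$, so these constitute a single pair of metatiles.

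The step I expect to be the main obstacle is the non-completion of $cCc$, since this is the one short string that satisfies the lemma's necessary condition yet still fails to be a metatile, and the argument must hold uniformly in $m$ (a naive parity count handles only the case of $m$ odd). The clean way I see to settle it is precisely the slot trace above: after $cC$ the occupied slots form the block $0,\dots,2m+1$ together with the isolated slot $2m+3$, so whatever comb is added at the first empty slot $2m+2$ can only move the first empty slot to the odd position $2m+5$, which is never a cell boundary; a fourth comb is therefore always required. Everything else is bookkeeping on the tooth count.
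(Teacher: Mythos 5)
Your proof is correct and follows the same route the paper implicitly takes: the corollary is stated there without proof, as an immediate consequence of the preceding lemma on how mixed metatiles must begin and end, combined with exactly the slot/tooth-count bookkeeping you carry out explicitly. The one genuinely non-obvious point --- that $cCc$ fails to close up even when $m$ is even, where parity alone does not exclude it --- is settled correctly by your slot trace, so your write-up supplies a detail the paper leaves to the reader.
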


We refer to a comb which is not one of two combs at the beginning or
the two combs at the end of the metatile as an \textit{interior comb}.

\begin{figure}[b]
\begin{center}
\includegraphics[width=12.7cm]{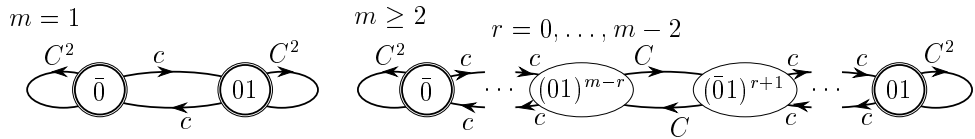}
\end{center}
\caption{Digraphs for generating configurations of interior combs when
  tiling with $(\frac12,\frac12;m+1)$-combs ($c$) and
  $(\frac12,\frac12;m+2)$-combs ($C$).} 
\label{f:mm+1-digraphs}
\end{figure}

In the digraphs shown in Fig.~\ref{f:mm+1-digraphs}, each walk
starting at the $\bar{0}$~node and ending at an exit node corresponds
to a configuration of interior combs such that the corresponding
metatile starts with $Cc$ (thereby leaving a right slot empty).
Notice that starting from either exit node it is not possible to place
a $C$ immediately followed by a $c$ since this would result in the end
of the metatile and the $Cc$ would therefore not be interior combs.
In the present digraph, the zero-length walk starting at $\bar{0}$
corresponds to the simplest mixed metatile (which has no interior
tiles). This differs from the interior tile digraph for tiling with
$h$ and $C$ since the simplest mixed metatiles in that case do have
interior tiles.  In the same sense as for the comb digraph in the
previous section, the nodes can be grouped into the pairs
$(\bar{0}1)^p$ and $(01)^{p+1}$ for $p=0,\ldots,m$.

To show the bijection in the following lemma, in
Fig.~\ref{f:fFf-digraphs} we present the digraphs for tiling an
$n$-board using $F_m$ ($F$), $F_{m-1}$ ($f$), and $\bar{F}_1$
($\bar{f}$).  Starting from a $0^210^{2p}1$~node for any
$p=0,\ldots,m-2-j$, we can place the left side of an $F_{m-j}$ where
$j=0,1$ in the first slot. It is straightforward to show that we are
then forced to add $p+2$ $\bar{f}$, which takes us to the
$0^210^{2(m-p-4+j)}1$ node, after which we then have a choice of what
tile to place next.

\begin{figure}
\begin{center}
\includegraphics[width=12.7cm]{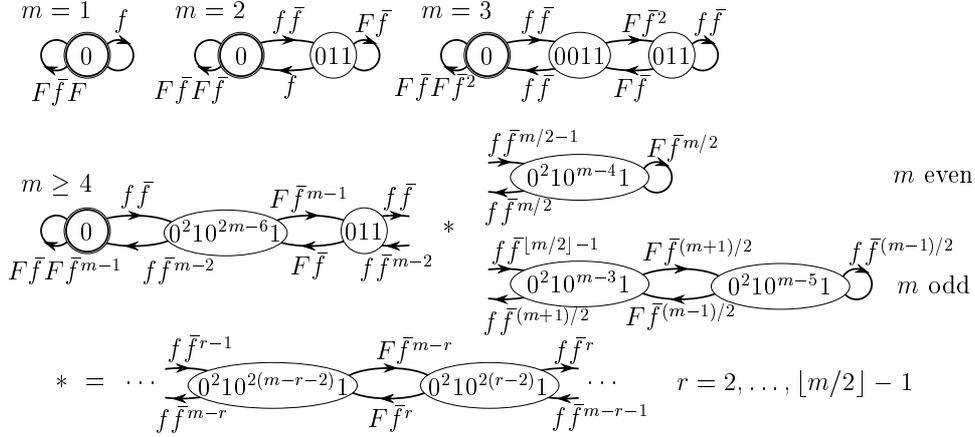}
\end{center}
\caption{Digraphs for tiling using $F_m$ ($F$),
  $F_{m-1}$ ($f$), and $\bar{F}_1$ ($\bar{f}$).}
\label{f:fFf-digraphs}
\end{figure}

\begin{lemma}\label{L:bijII}
There is a bijection between the possible pairs of configurations of
interior tiles of total length $n$ (not counting the gap when leaving
the digraph at the 01~node) when tiling
with $(\frac12,\frac12;m+1)$- and $(\frac12,\frac12;m+2)$-combs and the ways to
tile an $n$-board using $F$, $f$, and $\bar{f}$.
\end{lemma}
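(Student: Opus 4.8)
The plan is to mirror the proof of Lemma~\ref{L:bijI}, producing a type- and length-preserving correspondence between the walks in the interior-comb digraph of Fig.~\ref{f:mm+1-digraphs} that run from the $\bar{0}$~node to an exit node and the walks in the digraph of Fig.~\ref{f:fFf-digraphs} that run from the $0$~node back to the $0$~node, the latter being exactly the tilings of a board by $F$, $f$, and $\bar{f}$. As in Section~\ref{s:I}, one fence walk will correspond to a \emph{pair} of comb configurations: the $180^\circ$ rotation that interchanges the two members $(\bar{0}1)^p$ and $(01)^{p+1}$ of each comb node-pair preserves all arc types and directions, so it is enough to track one representative of each pair and the partner is obtained for free. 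The first step is therefore to read off from the two figures the correspondence between the comb node-pairs, indexed by $p=0,\ldots,m$, and the nodes $0^210^{2p}1$ of the fence digraph (together with the matching of the start node $\bar{0}$ with the $0$~node and of the two exit nodes); I expect this to be an index shift, with the smallest pairs corresponding to the boundary nodes near the $0$~node of the fence digraph.

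The feature that separates this lemma from Lemma~\ref{L:bijI} is that, in the fence digraph, laying the left side of an $F$ or an $f$ forces a whole run of $\bar{f}$'s before a free choice reappears: from $0^210^{2p}1$ one must follow $F_{m-j}$ ($j=0,1$) by exactly $p+2$ copies of $\bar{f}$. I would therefore regard $F\bar{f}^{p+2}$ and $f\bar{f}^{p+2}$ as single composite arcs and match them to the lone $C$ and $c$ arcs of the comb digraph, fixing which of $F,f$ goes with which of $C,c$ by comparing the target nodes of the two figures. The prohibition, noted just before the lemma, against leaving an exit node by a $C$ immediately followed by a $c$ is the precise counterpart of the fact that only $F$ or $f$ (never $\bar{f}$) may begin in an empty left slot, and this is what pins down the matching at the start and exit nodes and guarantees the map is a bijection.

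The main obstacle, exactly as in Lemma~\ref{L:bijI}, is to check that matched walks have the same length. On the fence side a composite arc $F\bar{f}^{p+2}$ or $f\bar{f}^{p+2}$ leaving $0^210^{2p}1$ consists of $p+3$ tiles and so contributes $p+3$ to the board length. On the comb side I would compute how far the solidly covered region advances along each $C$ or $c$ arc: a comb laid at the node-pair that the first step matches to $0^210^{2p}1$ drops its leading teeth into the empty slots of the partially filled (sawtooth) configuration recorded there, completing exactly $p+3$ whole cells before the next node is reached while its remaining teeth merely push the frontier forward. Thus the two counts agree arc by arc, and summing along the walk makes the totals match; the caveat about not counting the gap on leaving at the $01$~node absorbs the final half-cell so that the lengths are equal on the nose. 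The bookkeeping---verifying the forced $\bar{f}$-count, computing the advance of each comb arc, and treating the handful of boundary nodes near $\bar{0}$ and the $0$~node---is the delicate part, but the rotation symmetry halves it and every computation is structurally the same as the one already carried out in Section~\ref{s:I}.
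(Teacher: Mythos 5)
Your structural setup coincides with the paper's: the same node correspondence (an index shift matching the nodes $0^210^{2p}1$ of Fig.~\ref{f:fFf-digraphs} to the comb node-pairs of Fig.~\ref{f:mm+1-digraphs}, and the $0$~node to the pair of exit nodes), the same use of the $180^\circ$ rotation to account for the factor of two, and the same matching of arcs containing $f$ ($F$) to $c$ ($C$) arcs. Where you genuinely diverge is in verifying that corresponding walks have equal length. The paper avoids any per-arc computation: it observes that whenever two fence-digraph nodes are joined by an arc in each direction, both arcs contain the same letter ($f$ or $F$) and together comprise $m+1$ or $m+2$ tiles, which is exactly the combined length of the two $c$'s or two $C$'s they correspond to; since every such arc traversed outward must be matched by its reverse on the return to the $0$~node, only the two loops need separate checking. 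You instead propose a local ``frontier'' argument: each composite arc $F\bar{f}^{p+2}$ or $f\bar{f}^{p+2}$ contributes $p+3$ tiles, and the corresponding single comb completes $p+3$ whole cells. This can be made to work --- from the node-pair matched with $0^210^{2p}1$ a $c$ or a $C$ does complete exactly $p+3$ cells --- but it is the more laborious route: the per-arc cell counts at the boundary nodes are not uniform (the loop at the $0$~node corresponds to \emph{two} consecutive $C$ arcs completing $1$ and $m+1$ cells respectively, not to a single comb arc), and the ``cells completed'' total only equals the interior-comb length because the half-cell borrowed from the initial $Cc$ at entry cancels against the half-cell contributed past the frontier at exit; you note the exit-side correction but not the entry-side one. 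That deferred bookkeeping is precisely the substance of the lemma, so what you have is a sound plan rather than a finished proof; the paper's out-and-back pairing buys a much shorter verification at the cost of being less explicit about what each individual arc does.
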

\begin{proof}
The proof is similar to that of Lemma~\ref{L:bijI}. Each walk starting
and ending at the 0 node corresponds to a walk starting at the
$\bar{0}$ node and ending at either exit node. Arcs containing $f$
($F$) correspond to $c$ ($C$) arcs.  The 011 node corresponds to the
$\bar{0}1$ pair. For $m\ge3$ and $r=0,\ldots,m-3$, the
$0^210^{2(m-r-3)}$ node corresponds to the $(01)^{m-r}$ pair.  We now
show that the corresponding tilings are of the same length. For any
two fence digraph nodes which are linked to each other by an arc each
way it can be seen that both arcs either contain an $f$ or an $F$ and
the number of tiles in both arcs total $m+1$ and $m+2$, respectively,
which are precisely the total lengths of two $c$ and two $C$,
respectively, that the arcs correspond to. If one of the arcs is
traversed on the outward journey, the other must be traversed on the
return journey to the 0~node. This leaves the loop from the 0~node to
itself containing two $F$ (which corresponds to two interior $C$) and
$m+2$ tiles in total, and the loop at the other end of the fence
digraph which contains an $F$ ($f$) and $(m+2)/2$ ($(m+1)/2$) tiles in
total if $m$ is even (odd).
\end{proof}

\begin{theorem}\label{T:mu2}
For $m\ge0$ and $l\ge2m+3$,
\begin{equation}\label{e:mu2}
\mu_l^{(m+1,m+2)}=2P_{l-2m-3}^{\{-2,m-1,m\}}.
\end{equation}
\end{theorem}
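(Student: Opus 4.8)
The plan is to mirror the proof of Theorem~\ref{T:mu1} almost verbatim, substituting the $c,C$ ingredients for the $h,C$ ones. Concretely, I would build the chain of equalities: (number of mixed metatiles of length $l$) $=2\times$(number of pairs of such metatiles) $=2\times$(number of interior-comb configurations of the appropriate total length $n$) $=2\times$(number of $F,f,\bar f$ tilings of an $n$-board) $=2P_n^{\{-2,m-1,m\}}$, and then identify $n=l-2m-3$. The factor of two in the first step is supplied by Lemma~\ref{L:mtpairs}: the mixed metatiles genuinely occur in \emph{distinct} pairs, so counting pairs and doubling recovers $\mu_l^{(m+1,m+2)}$.

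First I would pin down the permanent. By the fence/permutation correspondence recalled in the Introduction (each non-negative $g\in W$ contributing an $F_g$ and each negative $-g\in W$ contributing a $\bar F_{g-1}$), the three fences $F=F_m$, $f=F_{m-1}$, and $\bar f=\bar F_1$ are exactly the tiles associated with the permissible-difference set $W=\{-2,m-1,m\}$. Hence the number of ways to tile an $n$-board with $F$, $f$, and $\bar f$ equals $P_n^{\{-2,m-1,m\}}$. Lemma~\ref{L:bijII} then identifies this quantity with the number of pairs of interior-comb configurations of total length $n$, completing the two middle links of the chain.

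It remains to compute $n$ in terms of $l$. Every completed metatile of length $l$ has all $2l$ slots filled, so its teeth have total length $l$. Any mixed metatile begins with $Cc$ or $cC$ and ends with $Cc$, so its four boundary combs always consist of two $C$'s and two $c$'s no matter what the interior looks like; their teeth have total length $2\cdot\tfrac{m+2}{2}+2\cdot\tfrac{m+1}{2}=2m+3$, which is also the length of the smallest mixed metatiles $CcCc$ and $cC^2c$. Subtracting, the interior combs carry total tooth-length $n=l-2m-3$, in the sense used in Lemma~\ref{L:bijII} (with the unfilled gap at the 01 exit node not counted). Feeding $n=l-2m-3$ into the chain above yields \eqref{e:mu2}.

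The delicate step is the length accounting just described, and in particular verifying that the bookkeeping convention of Lemma~\ref{L:bijII} — not counting the trailing gap when the interior walk exits at the 01 node — is precisely what keeps the boundary contribution equal to the constant $2m+3$ independently of the interior configuration; this is the analogue of the ``$l-1$'' statement used in Theorem~\ref{T:mu1} and should be confirmed by tracking one tooth at a time rather than assumed. I would also flag that the argument as written needs $m\ge1$ (so that $f=F_{m-1}$ is a genuine fence, matching the section heading), and that the endpoint $m=0$ of \eqref{e:mu2}, where $c$ degenerates to a half-square, must be read in the appropriate degenerate sense or checked directly.
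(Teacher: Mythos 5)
Your proposal is correct and follows essentially the same route as the paper: the factor of two from Lemma~\ref{L:mtpairs}, the bijection of Lemma~\ref{L:bijII} to $F,f,\bar f$ tilings, the fence--permutation correspondence identifying those tilings with $P_n^{\{-2,m-1,m\}}$, and the length shift $n=l-2m-3$. Your explicit accounting of the boundary combs (two $C$'s and two $c$'s contributing tooth-length $2m+3$) is a welcome elaboration of the paper's terse assertion that the interior tiles are $2m+3$ shorter than the metatile, and your remark that $f=F_{m-1}$ forces $m\ge1$ (with $m=0$ checked separately) is a fair caveat about the theorem's stated range.
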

\begin{proof}
As there is a bijection between permutations $\pi$ of $\Nset_n$
satisfying $\pi(i)-i\in\{-2,m-1,m\}$ and tilings of an $n$-board with
$\bar{f}$, $f$, and $F$, it follows that $P_{l-2m-3}^{\{-2,-1,m\}}$ is
the number of ways to tile an $(l-2m-3)$-board with $F$, $\bar{f}$,
and $\bar{S}$. By Lemma~\ref{L:bijII} and the fact that the total
length of the interior tiles is $2m+3$ less than the complete
metatile, $P_{l-2m-3}^{\{-2,m-1,m\}}$ equals the number of pairs of
mixed metatiles of length $l$.
\end{proof}

Note that $\mu^{(1,4)}_{n+1}=\mu^{(3,4)}_{n+7}=P^{\{-2,-1,2\}}_n$ (which
is sequence A080013 in \cite{Slo-OEIS} and is given by
$P_n=P_{n-2}+P_{n-3}+P_{n-4}-P_{n-6}+\delta_{n,0}-\delta_{n,2}$,
$P_{n<0}=0$) and so the squares of $s_n^{(1,4)}$ and $s_n^{(3,4)}$ are
both closely related to this sequence.

\begin{identity}\label{I:gen2}
For $n\ge0$,
\begin{equation}\label{e:gen2}
(s_n^{(m+1,m+2)})^2\!\!=\!\delta_{n,0}+(s_{n-m-1}^{(m+1,m+2)})^2
\!+(s_{n-m-2}^{(m+1,m+2)})^2\!+2\!\!\!\!\!\sum_{l=2m+3}^n\!\!\!\!\!\!P_{l-2m-3}^{\{-2,m-1,m\}}(s_{n-l}^{(m+1,m+2)})^2.
\end{equation}
\end{identity}
\begin{proof}
The proof is analogous to that for Identity~\ref{I:gen1}.
\end{proof}

The number of ways to tile an $n$-board using only $c^2$ and $C^2$ is
$s_n^{(m+1,m+2)}$ since this is equivalent to tiling an $n$-board with
$(m+1)$- and $(m+2)$-ominoes \cite{BCCG96,BQ=03}.  Then the proof of
the following identity mirrors that of Identity~\ref{I:s2-sI}.

\begin{identity}\label{I:s2-sII}
For $n\ge0$,
\begin{equation}\label{e:s2-sII}
(s_{n}^{(m+1,m+2)})^2=s_{n}^{(m+1,m+2)}+2\!\!\!\sum_{k=0}^{n-2m-3}\!\!\sum_{r=2m+3}^{n-k}
\!\!\!P_{r-2m-3}^{\{-2,m-1,m\}}s_{k}^{(m+1,m+2)}(s_{n-k-r}^{(m+1,m+2)})^2.
\end{equation}
\end{identity}

\section{Identities relating the Narayana's cows and Padovan numbers}
\label{s:cows}

The sequences $c_n\equiv s_n^{(1,3)}$ and $p_n\equiv s_n^{(2,3)}$ are
known, respectively, as the Narayana's cows and Padovan numbers. From
Theorem~\ref{T:-1}, $P_n^{\{-2,-1,1\}}=P_n^{\{2,1,-1\}}=p_n$
and $P_n^{\{-2,0,1\}}=P_n^{\{-1,0,2\}}=c_n$. This means that putting $m=1$ in
Identities \ref{I:gen1}, \ref{I:gen2}, \ref{I:n+m+2},
\ref{I:n(m+2)+jI}, \ref{I:s2-sI}, and \ref{I:s2-sII} gives,
respectively, the following identities relating $c_n$ and $p_n$.

\begin{identity}\label{I:genc}
For $n\ge0$,
\[
c_n^2=\delta_{n,0}+c_{n-1}^2
+c_{n-3}^2+2\sum_{l=3}^np_{l-1}c_{n-l}^2.
\]
\end{identity}

\begin{identity}\label{I:genp}
For $n\ge0$,
\[
p_n^2=\delta_{n,0}+p_{n-2}^2
+p_{n-3}^2+2\sum_{l=5}^nc_{l-5}p_{n-l}^2.
\]
\end{identity}

\begin{identity}\label{I:cn+3}
For $n\ge0$,
\[
c_{n+3}^2-1=\sum_{k=0}^n\left\{c_k^2+2\sum_{i=0}^kp_{k+2-i}c_i^2\right\}.
\]
\end{identity}

\begin{identity}\label{I:c3n+j}
For $n\ge0$ and $j=0,1,2$,
\[
c_{3n+j}^2=\delta_{j,0}+(1-\delta_{j,0})c_{j-1}^2+\sum_{k=1}^n\left\{c_{3k+j-1}^2
+2\!\!\!\!\!\!\sum_{i=0}^{3(k-1)+j}\!\!\!\!\!\!p_{3k+j-1-i}c_i^2\right\}.
\]
\end{identity}

\begin{identity}\label{I:c2-c}
For $n\ge0$,
\[
c_n^2=c_n+2\sum_{k=0}^{n-3}\sum_{r=3}^{n-k}
p_{r-1}c_{k}c_{n-k-r}^2.
\]
\end{identity}

\begin{identity}\label{I:p2-p}
For $n\ge0$,
\[
p_n^2=p_n+2\sum_{k=0}^{n-5}\sum_{r=5}^{n-k}
c_{r-5}p_{k}p_{n-k-r}^2.
\]
\end{identity}

\section{Discussion}
Although various methods have been devised to find expressions for
permanents of (0,1) Toeplitz matrices (or the equivalent problem of
enumerating the number of strongly restricted permutations)
\cite{CCR97,Sta=11,Klo09,Bal10,KOI19}, our results concerning
$P_n^{\{-2,M-1,m\}}$ appear from the literature to be only the second
explicit connection, after that of certain cases of
Theorem~\ref{T:-1}, made between permanents of (0,1) Toeplitz matrices
with three nonzero diagonals none of which is the leading diagonal and
other sequences. We have further results on $P_n^W$ obtained using
fences, but not combs, that we will present elsewhere.  As for combs,
techniques used previously to obtain bijective proofs of identities
using fences can be modified for comb tilings and applied to obtain
further identities concerning other sequences in a straightforward
manner.


\end{document}